\numberwithin{equation}{section}
\newtheorem{theorem}{Theorem}[section]
\newtheorem{lemma}[theorem]{Lemma}
\newtheorem{corollary}[theorem]{Corollary}
\newtheorem{conjecture}{Conjecture}[section]
\numberwithin{equation}{section}
\begin{document}

\title{On the sum of the reciprocals of the differences
between consecutive primes}

\author{Nianhong Zhou}
\address{Department of Mathematics\\
East China Normal University\\
500 Dongchuan Road, Shanghai 200241, PR China}
\email{nianhongzhou@outlook.com}
\subjclass[2010]{Primary: 11N05, Secondary: 11N36,  11A41}
\begin{abstract}
Let $p_n$ denote the $n$-th prime number, and let $d_n=p_{n+1}-p_{n}$. Under the Hardy--Littlewood prime-pair conjecture, we prove
\begin{align*}
\sum_{n\le X}\frac{\log^{\alpha}d_n}{d_n}\sim\begin{cases}
\quad\frac{X\log\log\log X}{\log X}~\qquad\quad~ &\alpha=-1,\\
\frac{X}{\log X}\frac{(\log\log X)^{1+\alpha}}{1+\alpha}\qquad &\alpha>-1,
\end{cases}
\end{align*}
and establish asymptotic properties for some series of $d_n$ without the Hardy--Littlewood prime-pair conjecture.
\end{abstract}
\keywords{Differences between consecutive primes; Hardy--Littlewood prime-pair conjecture; Applications of sieve methods.}
\maketitle

%\linenumbers

\section{Introduction} \label{section 1}

Let $p_n$ denote the $n$-th prime number, and let $d_n=p_{n+1}-p_{n}$. In \cite{MR1420202}, Erd\"{o}s and Nathanson show that for $c>2$,
\begin{equation}\label{11}
\sum_{n=3}^{\infty}\frac{1}{d_nn(\log\log n)^c}<+\infty.
\end{equation}
The authors give a heuristic argument explaining why the series must diverge for $c=2$. We will prove the above \eqref{11} by some conclusions of the sieve method.

Let $\mathcal{H}=\{0,h_1,\dots,h_{k-1}\}$ be a set of $k(k\ge 2)$ distinct integers satisfying $0<h_1<h_2<\dots<h_{k-1}$ and not covering all residue classes to any prime modulus. Also, denote
\begin{equation*}
\pi(x; \mathcal{H})=\#\{n\in\mathbb{N}:n+h_{k-1}\le x,n,n+h_1,\dots,n+h_{k-1}~\text{are all primes}\}.
\end{equation*}
The Hardy--Littlewood prime $k$-tuple conjecture is that, for $X\rightarrow+\infty$,
\begin{equation*}
\pi(X; \mathcal{H})=\mathfrak{S}(\mathcal{H})\frac{X}{\log^{k}X}\left(1+o(1)\right),
\end{equation*}
where the singular series
\begin{equation*}
\mathfrak{S}(\mathcal{H})=\prod_{p}\left(1-\frac{v_{\mathcal{H}}(p)}{p}\right)\left(1-\frac{1}{p}\right)^{-k},
\end{equation*}
with $p$ running through all the primes and
\[v_{\mathcal{H}}(p)=\#\{m~{(\bmod~p)}: m(m+h_1)\dots(m+h_{k-1})\equiv 0\bmod p\}.\]
We will also need the following well-known sieve bound, for $X$ sufficiently large,
\begin{equation}\label{14}
\pi(X; \{0, h, d\})\le 2^3\times 3! \mathfrak{S}(\{0, h, d\})\frac{X}{\log^{3}X}\left(1+o(1)\right),
\end{equation}
when $\mathfrak{S}(\{0, h, d\})\neq 0$. (See
Iwaniec and Kowalski's excellent monograph \cite{MR2061214}.)

To prove our main theorem, we will need the following Hardy--Littlewood prime-pair conjecture.
\begin{conjecture}\label{conj1}Let $X$ be sufficiently large and $d\ll \log X$ be a natural number. Then
\begin{equation}
\pi(X,\{0,d\})=\mathfrak{S}(\{0,d\})\frac{X}{\log^{2}X}\left(1+o(1)\right),
\end{equation}
where
\begin{align*}
\mathfrak{S}(\{0,d\})=\begin{cases}2\prod\limits_{p>2}\left(1-\frac{1}{(p-1)^2}\right)\prod\limits_{p|d,p>2}\left(\frac{p-1}{p-2}\right)&\text{if d is even},\\
\qquad\qquad\qquad 0\qquad\qquad\qquad&\text{if d is odd}.
\end{cases}
\end{align*}
with the product extending over all primes $p>2$.
\end{conjecture}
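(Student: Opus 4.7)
The statement is the Hardy--Littlewood prime-pair conjecture, which is one of the central open problems of analytic number theory. I will therefore not attempt an unconditional proof, but instead describe how one derives the singular series $\mathfrak{S}(\{0,d\})$ heuristically and how the circle method would, in principle, confirm the asymptotic, flagging at each step the obstruction that has so far blocked a rigorous argument.

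The form of $\mathfrak{S}(\{0,d\})$ comes from a local-density heuristic, one prime at a time. The naive model -- ``$n$ and $n+d$ are independently prime, each with density $1/\log X$'' -- predicts $\pi(X,\{0,d\}) \approx X/\log^2 X$. One then corrects by comparing, for each prime $p$, the true proportion of admissible residue classes with the independent one: the per-prime ratio is $(1 - v_{\mathcal{H}}(p)/p)(1-1/p)^{-2}$, and the product of these ratios over all $p$ is $\mathfrak{S}(\{0,d\})$. For $\mathcal{H}=\{0,d\}$ one verifies that $v(p)=2$ when $p\nmid d$, $v(p)=1$ when $p\mid d$, and $v(2)=2$ if $d$ is odd (forcing one of $n, n+d$ to be even and the series to vanish). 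Pulling out the $p=2$ factor and separating the primes dividing $d$ rearranges the Euler product into the closed form stated.

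To convert this heuristic into a proof I would invoke the Hardy--Littlewood circle method: write
\begin{equation*}
\pi(X,\{0,d\}) = \int_0^1 S(\alpha) S(-\alpha) e(\alpha d)\, d\alpha, \qquad S(\alpha) = \sum_{p \le X} e(\alpha p),
\end{equation*}
and dissect $[0,1]$ into major and minor arcs. On the major arcs centered at rationals $a/q$ with small $q$, the Siegel--Walfisz theorem combined with standard Ramanujan-sum manipulations would extract a main term of size $\mathfrak{S}(\{0,d\}) X / \log^2 X$, with uniformity in $d \ll \log X$ coming from elementary bounds on the tail of the Euler product.

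The decisive obstacle is the minor-arc contribution, and this is the reason the conjecture remains open. Because this is a \emph{binary} additive problem, the trivial mean-square bound $\int_0^1 |S(\alpha)|^2\, d\alpha \ll X/\log X$ already matches the predicted main term, leaving no room to dominate the minor arcs nontrivially in the way one does for ternary Goldbach. This is a manifestation of the parity barrier, and no sieve or circle-method argument currently breaks it for prime pairs. Consequently, only the two-variable analogue of the sieve upper bound \eqref{14}, with an implicit constant strictly greater than $1$, is available unconditionally; the asymptotic equality asserted here has to be assumed, which is precisely why the author records the statement as Conjecture~\ref{conj1} rather than as a theorem.
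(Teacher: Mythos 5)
You have correctly recognized that this statement is the Hardy--Littlewood prime-pair conjecture, which the paper states as Conjecture~\ref{conj1} and assumes as a hypothesis rather than proving; the paper contains no proof, so there is nothing to compare against. Your heuristic derivation of the local factors (namely $v(p)=2$ for $p\nmid d$, $v(p)=1$ for $p\mid d$, and the vanishing at $p=2$ for odd $d$) and your account of why the binary minor-arc problem blocks a rigorous circle-method proof are both accurate.
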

Our main result can be summarized as follows under the above conjecture.
\begin{theorem}\label{them1}Assume that the Hardy--Littlewood prime-pair conjecture holds for all sufficiently large $X$. Then we have
\begin{align*}
\sum_{n\le X}\frac{\log^{\alpha}d_n}{d_n}\sim\begin{cases}\frac{X}{\log X}\frac{(\log\log X)^{1+\alpha}}{1+\alpha}\qquad &\alpha>-1,\\
\quad\frac{X\log\log\log X}{\log X}\qquad &\alpha=-1.
\end{cases}
\end{align*}
\end{theorem}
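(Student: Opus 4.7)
My approach is to rewrite the sum as $\sum_d(\log^\alpha d/d)\,N_d(X)$ where $N_d(X):=\#\{n\le X:d_n=d\}$, estimate $N_d(X)$ for moderate $d$ via Conjecture~\ref{conj1} and \eqref{14}, and handle the tail of large gaps using the trivial bound $\sum_n d_n\sim X\log X$.

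\emph{Relating $N_d(X)$ to the conjecture.} Any prime pair $(p,p+d)$ with $p\le p_X$ either has $p+d$ as the next prime (contributing to $N_d(X)$) or else admits an intermediate prime $p+h$ with $0<h<d$, so that $(p,p+h,p+d)$ is a $3$-term prime tuple. This gives
\begin{equation*}
N_d(X) = \pi(p_X;\{0,d\}) - E_d(X) + O(1), \qquad E_d(X)\le\sum_{0<h<d}\pi(p_X;\{0,h,d\}).
\end{equation*}
Applying \eqref{14} together with the Euler-product estimate $\sum_{0<h<d}\mathfrak{S}(\{0,h,d\})\ll d$ and $p_X\sim X\log X$ (so $\log p_X\sim\log X$) yields $E_d(X)\ll dX/\log^2X$. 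Under Conjecture~\ref{conj1}, $\pi(p_X;\{0,d\})\sim\mathfrak{S}(\{0,d\})\,X/\log X$ uniformly for $d$ in any range $d\le A\log X$.

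\emph{Main term.} Fix a large constant $A$ and set $D=A\log X$. Substituting the above into the small-gap piece,
\begin{equation*}
\sum_{d\le D}\frac{\log^\alpha d}{d}N_d(X) = \frac{X}{\log X}(1+o(1))\sum_{d\le D}\frac{\log^\alpha d}{d}\mathfrak{S}(\{0,d\}) + O\!\left(\frac{X(\log\log X)^\alpha}{\log X}\right),
\end{equation*}
the error coming from $\sum_{d\le D}\log^\alpha d\ll D(\log\log X)^\alpha$. By the classical mean value $\sum_{d\le Y}\mathfrak{S}(\{0,d\})\sim Y$ combined with Abel summation, the weighted sum is asymptotic to $\int_2^D(\log^\alpha t/t)\,dt$; since $\log D\sim\log\log X$, this gives $(\log\log X)^{1+\alpha}/(1+\alpha)$ for $\alpha>-1$ and $\log\log\log X$ for $\alpha=-1$, matching the claimed main terms. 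The $O$-term is smaller by a factor of at least $\log\log X$.

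\emph{Tail.} For $d_n>D$, I would apply Abel summation with $M(t):=\#\{n\le X:d_n>t\}$ and use the unconditional bound $M(t)\le(1+o(1))X\log X/t$ obtained from $\sum_n d_n=p_{X+1}-2\sim X\log X$. This produces a tail of size $O\!\left(X(\log\log X)^\alpha/(A^2\log X)\right)$, negligible against the main term for any fixed large $A$. The main obstacle will be establishing the two singular-series mean values $\sum_{d\le Y}\mathfrak{S}(\{0,d\})\sim Y$ and $\sum_{0<h<d}\mathfrak{S}(\{0,h,d\})\ll d$ uniformly in $d$ — both are classical but require careful Euler-product bookkeeping, particularly the parity contrast between even and odd $d$ — together with ensuring the implicit $o(1)$ in Conjecture~\ref{conj1} is uniform over $d\le A\log X$.
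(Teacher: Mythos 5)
Your proposal is correct and follows essentially the same route as the paper: the same inclusion--exclusion comparing $N_d(X)$ with $\pi(\cdot;\{0,d\})$ minus triple correlations bounded by \eqref{14} and Lemma \ref{lem22}, the same singular-series average (Lemma \ref{lem21}) fed through Abel summation, and a trivial bound on large gaps. The only (harmless) differences are that you take a single cutoff at $A\log X$ — which for the specific weight $\log^{\alpha}d/d$ lets you absorb the triple-correlation error directly and skip the paper's intermediate range $\log X(\log\log X)^{-1}<d\le\log X$ handled by its Lemma \ref{lem24} — and that you bound the tail via the first moment $\sum_{n\le X}d_n\sim X\log X$ rather than by the trivial count of terms.
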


Letting $\alpha=0$ in above theorem and using Abel's summation formula, one can obtain the following corollary.
\begin{corollary}\label{cor1}Let $X$ be sufficiently large. Then
\begin{align*}
\sum_{3\le n\le X}\frac{1}{d_nn(\log\log n)^c}=\begin{cases}\qquad\quad \gamma_c+o(1)\qquad &c>2,\\
~\log\log\log X\left(1+o(1)\right)\quad&c=2,\\
~\frac{(\log\log X)^{2-c}}{2-c}\left(1+o(1)\right)\qquad &c<2,
\end{cases}
\end{align*}
where $\gamma_c$ is a constant.
\end{corollary}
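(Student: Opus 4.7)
The plan is to apply Abel's summation with weights $a_n = 1/d_n$ and smooth function $f(t) = 1/(t(\log\log t)^c)$, using the $\alpha=0$ case of Theorem 1.1, namely
\[
A(X) := \sum_{n\le X}\frac{1}{d_n} \sim \frac{X\log\log X}{\log X}.
\]
Writing $\sum_{3\le n\le X} a_n f(n) = A(X)f(X) - A(3)f(3) - \int_3^X A(t)f'(t)\,dt$, the boundary term at $X$ is of order
\[
\frac{1}{\log X\,(\log\log X)^{c-1}},
\]
which is negligible in every case considered, while $A(3)f(3)$ is an absolute constant that can be absorbed.

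Next I would compute $f'(t)$ and observe that
\[
-f'(t) = \frac{1}{t^{2}(\log\log t)^{c}} + \frac{c}{t^{2}\log t\,(\log\log t)^{c+1}},
\]
the first term being the dominant one. Substituting the asymptotic for $A(t)$ and making the change of variables $u=\log\log t$, so that $du = dt/(t\log t)$, the main integral becomes
\[
\int_{3}^{X} A(t)(-f'(t))\,dt \sim \int_{\log\log 3}^{\log\log X} u^{\,1-c}\,du,
\]
up to an error from the secondary term in $f'$ and from the $o(1)$ in Theorem 1.1, both of which are smaller by at least a factor $1/\log t$ inside the integrand.

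Splitting the primitive $\int u^{1-c}\,du$ according to whether $c-2$ is positive, zero or negative then yields the three regimes. For $c>2$ the integral on $[\log\log 3,\infty)$ converges, and setting $\gamma_c$ equal to this convergent integral (adjusted by the constant $-A(3)f(3)$) gives the claimed $\gamma_c + o(1)$, since the tail and the boundary term at $X$ both tend to $0$. For $c=2$ one gets $\log u$ evaluated between the endpoints, producing $(1+o(1))\log\log\log X$. For $c<2$ one gets $u^{2-c}/(2-c)$, producing $(1+o(1))(\log\log X)^{2-c}/(2-c)$; in both of the latter cases the boundary and secondary error terms are of strictly smaller order than the main term and can be absorbed into the $o(1)$.

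The only technically delicate point is justifying that the $o(1)$ error in $A(t) = (1+o(1))\,t\log\log t/\log t$ integrates to an error of smaller order than the main term when $c\le 2$, but this follows from an $\varepsilon/\delta$ splitting of the range of integration at some $t_0=t_0(\varepsilon)$ beyond which the $o(1)$ is $\le\varepsilon$, since the main integrand is eventually monotone in $t$ and the primitive we derived grows without bound. Routine bookkeeping then delivers the three cases in the stated form.
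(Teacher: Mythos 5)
Your proposal is correct and follows exactly the route the paper indicates: it derives the corollary from the $\alpha=0$ case of Theorem \ref{them1} via Abel summation with $f(t)=1/(t(\log\log t)^{c})$, the substitution $u=\log\log t$ turning the main integral into $\int u^{1-c}\,du$, whose three regimes give the three cases. The paper gives no further detail beyond ``letting $\alpha=0$ \dots and using Abel's summation formula,'' so your write-up simply supplies the bookkeeping (boundary terms, the secondary term of $f'$, and the integration of the $o(1)$) that the paper omits.
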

Without the Hardy--Littlewood prime-pair conjecture, using the same idea one can obtain the following result.
\begin{theorem}\label{them2}Let $X$ be sufficiently large. Then
\begin{align*}
\sum_{n\le X}\frac{\log^{\alpha}d_n}{d_n}\ll \begin{cases}\frac{X}{\log X}\frac{(\log\log X)^{1+\alpha}}{1+\alpha}\qquad &\alpha>-1,\\
\quad\frac{X\log\log\log X}{\log X}\qquad &\alpha=-1.
\end{cases}
\end{align*}
\end{theorem}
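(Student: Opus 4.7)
The plan is to repeat the argument underlying Theorem~\ref{them1}, but to replace every use of the conjectural asymptotic from Conjecture~\ref{conj1} by the unconditional Brun--Selberg upper bound
\begin{equation*}
\pi(Y;\{0,d\})\ll\mathfrak{S}(\{0,d\})\frac{Y}{\log^{2}Y},
\end{equation*}
which holds uniformly in even $d\le Y^{1/2}$ (for odd $d$ the left side is trivially at most $1$ and $\mathfrak{S}(\{0,d\})=0$). Since only an upper bound on $\sum_{n\le X}\log^{\alpha}(d_n)/d_n$ is sought, no lower bound on the pair count is required, and in particular the triple sieve inequality~\eqref{14}, which in the conditional proof serves to discard intermediate primes inside $(p_n,p_n+d_n)$, is not needed here.

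If $d_n=d$ then $p_n$ and $p_n+d$ are both primes with $p_n\le p_X\ll X\log X$, so the above sieve bound yields
\begin{equation*}
\#\{n\le X:d_n=d\}\le\pi(p_X;\{0,d\})\ll\mathfrak{S}(\{0,d\})\frac{X}{\log X}.
\end{equation*}
Setting $T:=\log X$, we split $\sum_{n\le X}\log^{\alpha}(d_n)/d_n=\Sigma_1+\Sigma_2$ according to whether $d_n\le T$ or $d_n>T$. For the large-gap part $\Sigma_2$, the elementary estimate $\sum_{n\le X}d_n\le p_{X+1}\ll X\log X$ together with Markov's inequality yields $\#\{n\le X:d_n\ge t\}\ll X\log X/t$, and a dyadic decomposition then gives
\begin{equation*}
\Sigma_2\ll X\log X\sum_{2^k\ge T}\frac{k^{\alpha}}{4^k}\ll\frac{X(\log\log X)^{\alpha}}{\log X},
\end{equation*}
which is of strictly smaller order than the target in both regimes $\alpha>-1$ and $\alpha=-1$.

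For $\Sigma_1$, inserting the sieve bound produces
\begin{equation*}
\Sigma_1\ll\frac{X}{\log X}\sum_{\substack{d\le T\\2\mid d}}\frac{\log^{\alpha}d}{d}\,\mathfrak{S}(\{0,d\}),
\end{equation*}
and, combined with partial summation and the mean-value bound
\begin{equation*}
\sum_{\substack{d\le T\\2\mid d}}\frac{\mathfrak{S}(\{0,d\})}{d}\ll\log T,
\end{equation*}
this yields $\Sigma_1\ll X(\log\log X)^{1+\alpha}/((1+\alpha)\log X)$ for $\alpha>-1$ and $\Sigma_1\ll X\log\log\log X/\log X$ for $\alpha=-1$, as claimed. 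The main obstacle is precisely this last mean-value bound: on even $d$ the singular series $\mathfrak{S}(\{0,d\})$ is a fixed positive constant times the multiplicative function $d\mapsto\prod_{p\mid d,\,p>2}(p-1)/(p-2)$, whose Dirichlet series factors as $\zeta(s)$ times an Euler product absolutely convergent at $s=1$, and a standard Tauberian argument then delivers the claim. All remaining ingredients are routine.
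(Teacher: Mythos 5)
Your argument is correct, and it differs from the paper's in one genuine respect while coinciding in the main range. The paper proves Theorem \ref{them2} by rerunning the proof of Theorem \ref{them1} with Conjecture \ref{conj1} replaced by the Selberg upper bound $\pi(X;\{0,d\})\ll\mathfrak{S}(\{0,d\})X/\log^2X$: Lemma \ref{lem23}(b) handles $d_n\le y=\log X(\log\log X)^{-1}$, Lemma \ref{lem24} handles $y<d_n\le\log X$, and the range $d_n>\log X$ is dispatched by monotonicity of $f$ together with the trivial count of gaps; the singular-series average is imported from the Friedlander--Goldston asymptotic (Lemma \ref{lem21}). Your treatment of $d\le\log X$ is in substance the same reduction to bounding $\sum_{d\le T}\log^{\alpha}(d)\,\mathfrak{S}(\{0,d\})/d$, except that you prove the required mean value $\sum_{d\le T}\mathfrak{S}(\{0,d\})\ll T$ directly from multiplicativity of $\prod_{p\mid d,\,p>2}(p-1)/(p-2)$ rather than quoting Lemma \ref{lem21}; since only an upper bound is needed, this is legitimate and makes the proof self-contained. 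Your observation that the triple-correlation bound \eqref{14} and Lemma \ref{lem22} are unnecessary here is consistent with the paper: part (b) of Lemma \ref{lem23} uses only the one-sided inequality $\sum_{d_n=d}1\le\pi(X;\{0,d\})$. Where you genuinely depart is the tail $d_n>\log X$: instead of sieve input you use $\sum_{n\le X}d_n\le p_{X+1}\ll X\log X$, Markov's inequality and a dyadic decomposition, obtaining a contribution $\ll X(\log\log X)^{\alpha}/\log X$, which is indeed of lower order in both regimes. This is more elementary than Lemma \ref{lem24}, and it shows that the paper's three-range splitting is only needed for the asymptotic of Theorem \ref{them1}, not for the upper bound. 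The one point you should make explicit is the uniformity of the pair sieve bound in $d$ (say for even $d\le Y^{1/2}$, applied at $Y=p_X$ for all $d\le\log X$); this is standard and is the same input the paper uses tacitly in Lemmas \ref{lem23}(b) and \ref{lem24}.
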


Similar to Corollary \ref{cor1}, one can obtain the following corollary.
\begin{corollary}\label{cor2}Let $X$ be sufficiently large. Then
\begin{align*}
\sum_{3\le n\le X}\frac{1}{d_nn(\log\log n)^c}=\begin{cases}\qquad \gamma_c+o(1)~\qquad &c>2,\\
O\left(\log\log\log X\right)~\qquad &c=2,\\
O\left(\frac{(\log\log X)^{2-c}}{2-c}\right)\qquad &c<2,
\end{cases}
\end{align*}
where $\gamma_c$ is a constant.
\end{corollary}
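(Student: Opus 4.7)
The corollary follows from Theorem \ref{them2} in the same way that Corollary \ref{cor1} follows from Theorem \ref{them1}: by a single application of Abel summation. The plan is to set
\[
A(t)=\sum_{3\le n\le t}\frac{1}{d_n},\qquad f(t)=\frac{1}{t(\log\log t)^{c}},
\]
and to apply the identity
\[
\sum_{3\le n\le X}\frac{1}{d_n n(\log\log n)^{c}}=A(X)f(X)-\int_{3}^{X}A(t)f'(t)\,dt.
\]
The only input beyond the definitions is Theorem \ref{them2} with $\alpha=0$, which provides the uniform bound $A(t)\ll t\log\log t/\log t$.

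The boundary term is then $A(X)f(X)\ll 1/(\log X(\log\log X)^{c-1})$, which is harmless in every regime under consideration. Differentiating $f$ and discarding the manifestly smaller contribution from $(\log\log t)^{-c-1}/\log t$, the integral is bounded, up to an absolute constant, by
\[
\int_{3}^{X}\frac{A(t)}{t^{2}(\log\log t)^{c}}\,dt\ll\int_{3}^{X}\frac{dt}{t\log t(\log\log t)^{c-1}}.
\]
The substitution $u=\log\log t$ converts this into $\int_{\log\log 3}^{\log\log X}u^{1-c}\,du$, whose three regimes reproduce exactly the cases of the statement: the integral is bounded for $c>2$, of order $\log\log\log X$ for $c=2$, and of order $(\log\log X)^{2-c}/(2-c)$ for $c<2$.

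The only case that requires anything beyond an $O(\cdot)$ bound is $c>2$. There, uniform boundedness of the partial sums together with the positivity of every summand forces the infinite series $\sum_{n\ge 3}1/(d_n n(\log\log n)^c)$ to converge; I would call its value $\gamma_c$. The difference between the partial and total sum is the tail, which the same substitution bounds by $\int_{\log\log X}^{\infty}u^{1-c}\,du\ll(\log\log X)^{2-c}=o(1)$, yielding the asserted $\gamma_c+o(1)$.

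The main obstacle is essentially nil; the argument is a bookkeeping exercise around a single Abel-summation identity. The one subtlety worth flagging is that Theorem \ref{them2} is only a one-sided bound, so, unlike in Corollary \ref{cor1}, the cases $c\le 2$ cannot be upgraded from $O(\cdot)$ to genuine asymptotic equivalence, which is why the statement is phrased with big-$O$ in those ranges.
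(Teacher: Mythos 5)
Your proposal is correct and is exactly the route the paper intends: the paper derives Corollary \ref{cor2} from Theorem \ref{them2} with $\alpha=0$ via Abel summation, just as Corollary \ref{cor1} follows from Theorem \ref{them1}, and your computation (boundary term negligible, main integral reduced to $\int u^{1-c}\,du$ after $u=\log\log t$, with positivity plus bounded partial sums giving convergence and the $\gamma_c+o(1)$ form for $c>2$) fills in the details the paper leaves implicit. Your closing remark about why the $c\le 2$ cases are only $O(\cdot)$ here, in contrast to Corollary \ref{cor1}, is also the right observation.
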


\section{Basic Lemma}\label{section 2}
To prove Theorem \ref{them1}, we need the following lemmas.
\begin{lemma}\label{lem21}{\rm (See~\cite[Proposition~1]{MR1299655})}. Let $X$ be sufficiently large. Then
\begin{align*}
\sum_{d\le X}\mathfrak{S}(\{0, d\})-X+\frac{\log X}{2}\ll \log^{\frac{2}{3}} X.
\end{align*}
\end{lemma}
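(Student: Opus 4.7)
The plan is to compute the Dirichlet generating series
$$D(s) := \sum_{d\ge 1} \mathfrak{S}(\{0,d\})\,d^{-s}$$
in closed form and extract the claimed asymptotic via Perron's formula. Since $\mathfrak{S}(\{0,d\}) = 0$ for odd $d$, I would first restrict to $d = 2m$ and factor out the twin-prime constant $C := 2\prod_{p>2}(1 - (p-1)^{-2})$, writing $\mathfrak{S}(\{0,2m\}) = C f(m)$ with $f(m) := \prod_{p\mid m,\, p>2}(p-1)/(p-2)$. Since $f$ depends only on the odd squarefree kernel of $m$, the convolution identity $f = \mathbf{1} \ast g$ yields a multiplicative $g$ supported on odd squarefree integers with $g(p) = 1/(p-2)$, so $D(s) = C\cdot 2^{-s}\zeta(s)G(s)$ where $G(s) := \prod_{p>2}\bigl(1 + p^{-s}/(p-2)\bigr)$.

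Next, a direct Euler-product manipulation factors $G(s) = \zeta(s+1)H(s)$ with
$$H(s) := (1 - 2^{-s-1})\prod_{p>2}(1 - p^{-s-1})\Bigl(1 + \tfrac{p^{-s}}{p-2}\Bigr);$$
the per-prime factors are $1 + O(p^{-2 - \Re s})$, so $H$ is analytic in $\Re s > -\tfrac{1}{2}$, and a short telescoping check gives $H(0) = \tfrac{1}{2}G(1) = 1/C$ via the identity $C \cdot \prod_{p>2}(1 + 1/(p(p-2))) = 2$. Hence $D(s) = C\cdot 2^{-s}\zeta(s)\zeta(s+1)H(s)$, with a simple pole of $\zeta(s)$ at $s=1$ and a further simple pole (from $\zeta(s+1)$) at $s=0$. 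Applying Perron's formula and shifting the contour past both poles, the residue at $s=1$ yields $X$ (using $\zeta(2)H(1) = G(1) = 2/C$), while the double pole of $D(s)X^s/s$ at $s=0$ contributes $-\tfrac{1}{2}\log X$ plus an explicit constant, read off from the Laurent expansions $\zeta(s+1) = 1/s + \gamma + O(s)$, $\zeta(0) = -\tfrac{1}{2}$, and $X^s = 1 + s\log X + O(s^2)$.

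The main obstacle will be the sharp error bound of order $(\log X)^{2/3}$. A naive leftward contour shift with convexity bounds for $\zeta$ yields only $O(X^{1-\delta})$, which is far too weak. To obtain the polylogarithmic bound, I would instead analyze the elementary decomposition $\sum_{m \le X/2} f(m) = \sum_{e \le X/2} g(e)\lfloor X/(2e) \rfloor$ directly, isolate the fractional-part sum $\sum_{e\le X/2}g(e)\bigl(\{X/(2e)\} - \tfrac{1}{2}\bigr)$, and bound it by Fourier-expanding the sawtooth function and estimating the resulting exponential sums $\sum_{e} g(e) e(kX/(2e))$ using the multiplicative structure of $g$ together with Korobov--Vinogradov-type savings for $\zeta$. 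This last step is the technical heart of Goldston's argument in \cite{MR1299655}.
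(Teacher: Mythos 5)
The paper gives no proof of this lemma at all---it is quoted verbatim from Friedlander and Goldston \cite[Proposition~1]{MR1299655}---and your sketch correctly reconstructs the structure of that source's argument: the factorization $D(s)=C\,2^{-s}\zeta(s)\zeta(s+1)H(s)$ with $H(0)=1/C$ does account for the main terms $X-\tfrac12\log X$, and the $(\log X)^{2/3}$ error genuinely comes from Vinogradov--Korobov-type estimates applied to the sawtooth sum $\sum_{e}g(e)\bigl(\{X/(2e)\}-\tfrac12\bigr)$, which you rightly identify as the technical heart and defer to the cited work. Since the lemma is used in this paper purely as a black box, your treatment is consistent with (indeed more detailed than) the paper's own.
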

As a special case of \cite[Lemma~2]{MR3415646}, we have
\begin{lemma}\label{lem22} Let $d$ be an even integer. Then
\begin{align*}
\sum_{h=1}^{d-1}\mathfrak{S}(\{0, h, d\})=\mathfrak{S}(\{0, d\})d(1+o_d(1)).
\end{align*}
\end{lemma}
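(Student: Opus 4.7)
The plan is to work locally prime-by-prime, comparing the triple singular series $\mathfrak{S}(\{0,h,d\})$ with the pair singular series $\mathfrak{S}(\{0,d\})$ factor-wise, and showing that each odd-prime local ratio averages to one over a complete residue system. A Chinese Remainder Theorem argument then delivers $d\,\mathfrak{S}(\{0,d\})$ as the main term.

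First I will eliminate odd $h$: since $d$ is even, an odd $h$ makes $\{0,h,d\}$ cover both classes mod $2$, forcing $v_{\{0,h,d\}}(2)=2$ and hence $\mathfrak{S}(\{0,h,d\})=0$. On the surviving even $h$, the factor at $p=2$ is $4$, to be compared with the factor $2$ in $\mathfrak{S}(\{0,d\})$. For each odd prime $p$ I will tabulate $v_{\{0,h,d\}}(p)$ by cases ($p\mid d$ or not; $p\mid h$, $p\mid h-d$, or neither) and define
\[
R_p(h)=\frac{(1-v_{\{0,h,d\}}(p)/p)(1-1/p)^{-3}}{(1-v_{\{0,d\}}(p)/p)(1-1/p)^{-2}},
\]
which depends only on $h\bmod p$. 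A short calculation gives $R_p(h)=1+O(1/p^{2})$ off the (at most two) exceptional residues, and crucially the mean-one identity $p^{-1}\sum_{h\bmod p}R_p(h)=1$ for every odd $p$.

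Next I will sum over $h\in[1,d-1]$ by fixing a slowly growing cutoff $P=P(d)\to\infty$ and setting $L=\prod_{2<p\le P}p$. Partitioning the interval into residues modulo $2L$, the CRT together with the mean-one property yields
\[
\sum_{\substack{1\le h\le d-1\\ h\text{ even}}}\ \prod_{2<p\le P}R_p(h)=\frac{d}{2}+O(L),
\]
which after reinstating the $p=2$ factor and multiplying by $\mathfrak{S}(\{0,d\})$ reproduces the desired main term $d\,\mathfrak{S}(\{0,d\})$. For the tail $p>P$, the generic bound $R_p(h)=1+O(1/p^{2})$ together with the two exceptional residues per prime (on which $R_p(h)=O(1)$) are controlled using a standard divisor-type estimate $\prod_{p\mid h(h-d)}(1+O(1/p))\ll(\log\log d)^{O(1)}$ and the convergent tail $\sum_{p>P}1/p^{2}\to 0$.

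The main obstacle will be making the tail estimate uniform in $h$: exceptional $h$ for which $h(h-d)$ has an unusual concentration of small prime factors can inflate $\prod_{p>P}R_p(h)$, so I will need a first- or second-moment bound showing such $h$ contribute $o(d)$ to the sum. With this uniform control combined with the CRT main term, choosing $P$ slowly enough makes both the tail and boundary errors $o(d)$, yielding $\sum_{h=1}^{d-1}\mathfrak{S}(\{0,h,d\})=d\,\mathfrak{S}(\{0,d\})(1+o_{d}(1))$.
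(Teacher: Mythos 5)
Your argument is sound, but note that the paper does not actually prove this lemma: it quotes it as a special case of Lemma~2 of Goldston and Ledoan \cite{MR3415646}, whose proof goes through the standard additive expansion of the singular series over square-free moduli (Gallagher/Montgomery--Vaughan style) rather than through your multiplicative, prime-by-prime factorization. Your local computations check out: for odd $h$ the series vanishes since $\{0,h,d\}$ covers both classes mod $2$; for even $h$ the ratio at $2$ is $4/2=2$; and for odd $p$ one finds $R_p(h)=1-\frac{2}{(p-1)(p-2)}$ off the residues $h\equiv 0,d\pmod p$ and $R_p(h)=\frac{p}{p-1}$ on them when $p\nmid d$, while for $p\mid d$ the two values are $1-\frac{1}{(p-1)^2}$ and $\frac{p}{p-1}$ (the latter on $h\equiv 0$), so that $p^{-1}\sum_{h\bmod p}R_p(h)=1$ in every case. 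The truncated CRT computation then gives $d/2+O(L\log P)$ (the extra $\log P$ comes from bounding $\prod_{2<p\le P}R_p(h)\le\prod_{p\le P}\frac{p}{p-1}$ on the incomplete block, and is harmless for $L=o(d/\log P)$). The tail difficulty you flag is genuine but resolves exactly as you propose, by a first moment: since
\[
\log\prod_{p>P}R_p(h)\ll \frac1P+\sum_{\substack{p>P\\ p\mid h(h-d)}}\frac1p
\qquad\text{and}\qquad
\sum_{h\le d}\;\sum_{\substack{p>P\\ p\mid h(h-d)}}\frac1p\ll\sum_{p>P}\frac1p\Bigl(\frac{d}{p}+1\Bigr)\ll\frac{d}{P}+\log\log d,
\]
the exceptional $h$ contribute $o(d)$ once combined with the uniform bound $\prod_pR_p(h)\ll(\log\log d)^{O(1)}$. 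So your route is a complete, self-contained alternative to the citation; it costs the bookkeeping above but buys independence from \cite{MR3415646}. What it does not recover is the secondary term of order $\log d$ that the expansion method yields (compare Lemma~\ref{lem21} in the $k=2$ case), though only the $o_d(1)$ relative error is needed here.
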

The following lemma is important in this paper.
\begin{lemma}\label{lem23}Let $f(x)\in \mathcal{C}^1[2,+\infty)$ be strictly monotonically decreasing to $0$, and $\int_{2}^{\infty}f(t)\,dt$ divergence. Also, let $X$ sufficiently large, $y=o(\log X)$ and $y\gg \log\log X$.

(a)  Using Conjecture \ref{conj1}, we have
\[
\sum_{\substack{d_n\le y\\ p_{n+1}\le X}}f(d_n)\sim \frac{X}{\log X}\int_{2}^{y}f(t)\,dt.
\]

(b). Without using Conjecture \ref{conj1}, we have
\[
\sum_{\substack{d_n\le y\\ p_{n+1}\le X}}f(d_n)\ll \frac{X}{\log X}\int_{2}^{y}f(t)\,dt.
\]
\end{lemma}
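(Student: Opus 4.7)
The plan is to partition the sum by the gap value, sandwich each gap-count between a pair count and a triple correction, and then pass from the sum over $d$ to the integral via partial summation against Lemma~\ref{lem21}.

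Set $C(X,d):=\#\{n:p_{n+1}\le X,\,d_n=d\}$, so that $\sum_{\substack{d_n\le y\\p_{n+1}\le X}}f(d_n)=\sum_{2\le d\le y}f(d)\,C(X,d)$. Every prime pair $(p,p+d)$ counted by $\pi(X;\{0,d\})$ is consecutive unless some interior $p+h$ is prime, so inclusion-exclusion yields the sandwich
\[
\pi(X;\{0,d\})-\sum_{0<h<d}\pi(X;\{0,h,d\})\le C(X,d)\le \pi(X;\{0,d\}).
\]
The sieve bound \eqref{14} combined with Lemma~\ref{lem22} controls the triple correction via
\[
\sum_{0<h<d}\pi(X;\{0,h,d\})\ll \mathfrak{S}(\{0,d\})\,\frac{d\,X}{\log^3 X},
\]
which is $o(\mathfrak{S}(\{0,d\})X/\log^2 X)$ uniformly for $d\le y=o(\log X)$. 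Under Conjecture~\ref{conj1} the pair count is $\sim \mathfrak{S}(\{0,d\})X/\log^2 X$ uniformly in the same range of $d$, so the sandwich collapses to an asymptotic for $C(X,d)$; for part (b) one substitutes the standard Selberg-type upper bound $\pi(X;\{0,d\})\ll \mathfrak{S}(\{0,d\})X/\log^2 X$ to obtain an upper bound of the same shape.

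It remains to evaluate $\sum_{d\le y}f(d)\mathfrak{S}(\{0,d\})$ by partial summation against $A(t):=\sum_{d\le t}\mathfrak{S}(\{0,d\})=t+O(\log t)$ from Lemma~\ref{lem21}. This yields
\[
\sum_{d\le y}f(d)\,\mathfrak{S}(\{0,d\})=\int_2^y f(t)\,dt+O\!\left(f(y)\log y+\int_2^y \tfrac{f(t)}{t}\,dt\right).
\]
Monotonicity of $f$ gives $\int_2^y f(t)\,dt\ge (y-2)f(y)$, so the boundary error is $o(\int_2^y f)$; and the substitution $t=e^u$, which turns the two integrals into $\int g(u)e^u\,du$ and $\int g(u)\,du$ with $g(u)=f(e^u)$ decreasing, shows that $\int_2^y f(t)/t\,dt=o(\int_2^y f(t)\,dt)$ whenever $\int f$ diverges. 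Combining with the previous step delivers the claim.

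The main obstacle is the uniformity in $d\le y$ of the $(1+o(1))$ in Conjecture~\ref{conj1}: the conjecture is phrased with a $d$-dependent error, and one must argue the error can be majorised by a single function of $X$ alone over the relevant range. The hypothesis $y=o(\log X)$ is precisely what permits this, since it keeps the triple correction uniformly of smaller order than the pair main term and also respects the $d\ll\log X$ restriction in the statement of Conjecture~\ref{conj1}; the lower bound $y\gg \log\log X$ ensures in turn that $\int_2^y f(t)\,dt$ is large enough to absorb the two error terms in the Abel summation step.
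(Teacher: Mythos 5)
Your argument is correct and follows essentially the same route as the paper: the same inclusion--exclusion sandwich between $\pi(X;\{0,d\})$ and the triple counts, the same use of the sieve bound \eqref{14} with Lemma~\ref{lem22} to control the correction, and the same Abel summation against Lemma~\ref{lem21} to evaluate $\sum_{d\le y}f(d)\mathfrak{S}(\{0,d\})$. The only (harmless) difference is bookkeeping: you absorb the triple correction into a uniform $(1+o(1))$ for each $d\le y$, whereas the paper sums it first and bounds the total by $\frac{X}{\log^3 X}\int_2^y f(t)\,t\,dt \ll \frac{y}{\log X}\cdot\frac{X}{\log^2 X}\int_2^y f(t)\,dt$, and it justifies $\int_2^y f(t)t^{-1}\,dt=o\left(\int_2^y f(t)\,dt\right)$ by L'Hospital rather than your substitution.
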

\begin{proof}
The proof of parts (a) and (b) are essentially the same. Therefore, we prove part (a) only. Firstly, we have
\begin{align*}
\sum_{\substack{d_n\le y \\ p_{n+1}\le X}}f(d_n)&=\sum_{d\le y}f(d)\sum_{\substack{d_n=d\\ p_{n+1}\le X}}1\\
&=\sum_{d\le y}f(d)\pi(X; \{0, d\})+\sum_{d\le y}f(d)\left(\sum_{\substack{d_n=d\\p_{n+1}\le X}}1-\pi(X; \{0, d\})\right).
\end{align*}
By the inclusion-exclusion principle, it is easy to see that
\[\pi(X; \{0, d\})-\sum_{h=1}^{d-1}\pi(X; \{0, h, d\})\le \sum_{\substack{d_n=d\\ p_{n+1}\le X}}1\le \pi(X; \{0, d\}).\]
Hence
\begin{align}\label{sfa}
\sum_{\substack{d_n\le y\\p_{n+1}\le X}}f(d_n)=\sum_{d\le y}f(d)\pi(X; \{0, d\})+O\left(\sum_{d\le y}f(d)\sum_{h=1}^{d-1}\pi(X; \{0, h, d\})\right).
\end{align}
Combining (\ref{14}) with Lemma \ref{lem22}, we see that the error term in (\ref{sfa}) is
\[\ll \frac{X}{\log^3 X}\sum_{d\le y}f(d)\sum_{h=1}^{d-1}\mathfrak{S}(\{0, h, d\})\ll\frac{X}{\log^3 X}\sum_{d\le y}f(d)d\mathfrak{S}(\{0, d\}).\]
Using Abel's summation formula, noting that $f(x)\in \mathcal{C}^1[2,+\infty)$ is strictly monotonically decreasing to $0$ and $y\gg \log\log X$, we have
\begin{align*}
\sum_{d\le y}f(d)d\mathfrak{S}(\{0, d\})=\int_{2}^{y}f(x)x\,d\left(\sum_{d\le x}\mathfrak{S}(\{0, d\})\right)\ll\int_{2}^{y}f(x)x\,dx.
\end{align*}
Together with \eqref{conj1}, we have
\begin{align}\label{26}
\sum_{\substack{d_n\le y\\ p_{n+1}\le X}}f(d_n)=\frac{X}{\log^2 X}\left(1+o(1)\right)\sum_{d\le y}f(d)\mathfrak{S}(\{0, d\})+O\left(\frac{X}{\log^3 X}\int_{2}^{y}f(x)x\,dx\right).
\end{align}
Combining Lemma \ref{lem21} and using Abel's summation formula again, we
obtain
\begin{align}\label{27}
\sum_{d\le y}f(d)\mathfrak{S}(\{0, d\})&=\int_{2}^yf(x)\,d\left(\sum_{d\le x}\mathfrak{S}(\{0, d\})\right)\nonumber\\
&=\int_{2}^yf(x)\,dx+O(1)+O(f(y)\log y)+O\left(\int_{2}^y\frac{f(x)}{x}\,dx\right).
\end{align}
Since
$$\int_{2}^y\frac{f(x)}{x}\,dx=f(y)\log y-\int_{2}^yf'(x)\log x\,dx+O(1)$$
and $-\int_{2}^yf'(x)\log x\,dx>0$ by the assumption on $f$, hence by \eqref{26} and \eqref{27} we have
\[
\sum_{\substack{d_n\le y\\ p_{n+1}\le X}}f(d_n)=\frac{X}{\log^2 X}\left(\left(1+o(1)\right)\int_{2}^yf(x)\,dx+O\left(\int_{2}^y\frac{f(x)}{x}\,dx+\frac{\int_{2}^{y}f(x)x\,dx}{\log X}\right)\right).
\]
By using L'Hospital's rule, we get
\[
\lim_{y\rightarrow+\infty}\frac{\int_{2}^yf(x)x^{-1}\,dx}{\int_{2}^yf(x)\,dx}=0~~\text{and}~~\lim_{y\rightarrow+\infty}\left|\frac{\int_{2}^yf(x)x\,dx}{y\int_{2}^yf(x)\,dx}\right|\le 1.
\]
Hence
\[
\sum_{\substack{d_n\le y\\ p_{n+1}\le X}}f(d_n)=\frac{X}{\log^2 X}\left(1+o(1)+O\left(\frac{y}{\log X}\right)\right)\int_{2}^yf(x)\,dx.
\]
On noting that $y=o(\log X)$, we obtain the proof of part (a).
\end{proof}

\begin{lemma}\label{lem24}Let $f(x)\in \mathcal{C}^1[2,+\infty)$ be strictly monotonically decreasing to $0$, and $\int_{2}^{\infty}f(t)\,dt$ divergence. Also, let $X$ sufficiently large and $\log^{\frac{1}{2}}X\le x<y\le \log^2 X$. Then we have
\[
\sum_{\substack{x<d_n\le y\\ p_{n+1}\le X}}f(d_n)\ll \frac{X}{\log^2 X}\left(\int_{x}^yf(t)\,dt+f(x)\log \log X\right).
\]
\end{lemma}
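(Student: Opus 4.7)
The plan is to mimic the proof of Lemma~\ref{lem23}(b), with the standard sieve upper bound replacing Conjecture~\ref{conj1} throughout, and to exploit the constraint $x, y \in [\log^{1/2} X, \log^2 X]$ to bound the remainder in Lemma~\ref{lem21} by $O(\log\log X)$. First I use the trivial inequality $\#\{n : d_n = d,\, p_{n+1} \le X\} \le \pi(X; \{0, d\})$ to write
\[
\sum_{\substack{x < d_n \le y\\ p_{n+1}\le X}} f(d_n) \;\le\; \sum_{x < d \le y} f(d)\,\pi(X; \{0, d\}),
\]
and then invoke the standard Selberg-sieve upper bound $\pi(X; \{0, d\}) \ll \mathfrak{S}(\{0, d\})\,X/\log^2 X$ (the $k = 2$ analogue of~\eqref{14}), which holds uniformly for $d \le \log^2 X$. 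This reduces the task to estimating $(X/\log^2 X)\sum_{x < d \le y} f(d)\mathfrak{S}(\{0, d\})$.

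Second, I apply Abel summation to this weighted sum. Set $S(t) = \sum_{d \le t} \mathfrak{S}(\{0, d\})$ and write $S(t) = t + E(t)$. Lemma~\ref{lem21} gives $E(t) = -\tfrac{1}{2}\log t + O(\log^{2/3} t)$, and in the range $t \in [\log^{1/2} X,\, \log^2 X]$ both $\log t$ and $\log^{2/3} t$ are $O(\log\log X)$, so $E(t) \ll \log\log X$ uniformly. Summation by parts yields
\[
\sum_{x < d \le y} f(d)\mathfrak{S}(\{0, d\}) = f(y)S(y) - f(x)S(x) - \int_x^y S(t)\,f'(t)\,dt.
\]
The $S(t) = t$ contribution simplifies, after integration by parts, to $\int_x^y f(t)\,dt$, which is the desired main term. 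For the $E(t)$ contribution, monotonicity of $f$ gives $\int_x^y |f'(t)|\,dt = f(x) - f(y) \le f(x)$, so the boundary terms and the integral are together bounded by $O(f(x)\log\log X)$. Multiplying by $X/\log^2 X$ yields the claim.

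The step most deserving of care is the error bookkeeping in the Abel-summation argument: one must verify that the pointwise bound $E(t) \ll \log\log X$ holds throughout $[x, y]$ (which is exactly why the hypothesis $x \ge \log^{1/2} X$ matters — so that $\log^{2/3} t$ is comparable to $\log\log X$ on the whole interval), and that the three pieces $f(y)E(y)$, $f(x)E(x)$, and $\int_x^y E(t)f'(t)\,dt$ consolidate to the single term $f(x)\log\log X$. Beyond that, the proof is mechanically identical to Lemma~\ref{lem23}(b); no new sieve input is required, since the inclusion–exclusion correction used in~\eqref{sfa} is not needed for an upper bound of this shape.
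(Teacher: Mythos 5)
Your proposal is correct and follows essentially the same route as the paper: bound the count for each $d$ trivially by $\pi(X;\{0,d\})$, apply the standard sieve upper bound $\pi(X;\{0,d\})\ll \mathfrak{S}(\{0,d\})X/\log^2 X$, and then use Lemma~\ref{lem21} with Abel summation, exploiting $y\le \log^2 X$ to control the $O(\log t)$ remainder by $O(\log\log X)$. The only (harmless) slip is attributing the bound $E(t)\ll\log\log X$ to the hypothesis $x\ge\log^{1/2}X$; in fact it is the upper constraint $t\le y\le\log^2 X$ that gives $\log t\ll\log\log X$, exactly as in the paper's final display.
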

\begin{proof}
Since $f(x)$ is strictly monotonically decreasing and $\log^{\frac{1}{2}}X\le x<y\le \log^2 X$, we have
\begin{align*}
\sum_{\substack{x<d_n\le y\\ p_{n+1}\le X}}f(d_n)&=\sum_{x<d\le y}f(d)\sum_{\substack{d_n=d\\ p_{n+1}\le X}}1\le \sum_{x<d\le y}f(d)\pi(X; \{0, d\})\\
&\ll\frac{X}{\log^2 X} \sum_{x<d\le y}f(d)\mathfrak{S}(\{0, d\})\ll\frac{X}{\log^2 X}\left(\int_{x}^yf(t)\,d\left(t+O(\log t)\right)\right)\\
&\ll\frac{X}{\log^2 X}\left(\int_{x}^yf(t)\,dt+f(x)\log y+\int_{x}^y\left|f'(t)\right|\log t\,dt\right)\\
&\ll\frac{X}{\log^2 X}\left(\int_{x}^yf(t)\,dt+f(x)\log \log X\right).
\end{align*}
This completes the proof of the lemma.
\end{proof}

\section{The proof of main theorem}
Let $y=\log X(\log\log X)^{-1}$ and $f(t)=t^{-1}\log^{\alpha}t$ $(\alpha\ge -1)$. Using Lemma \ref{lem23} and Lemma \ref{lem24}, we have
\begin{align*}
\sum_{p_{n+1}\le X}f(d_n)=&\sum_{\substack{d_n\le y\\p_{n+1}\le X}}f(d_n)+\sum_{\substack{p_{n+1}\le X\\y<d_n\le \log X}}f(d_n)+\sum_{\substack{d_n>\log X\\p_{n+1}\le X}}f(d_n)\\
=&\frac{X}{\log^2X}(1+o(1))\int_{2}^yf(x)\,dx\\
&+O\left(\frac{X\int_{y}^{\log X}f(t)\,dt}{\log^2X}+\frac{X\log \log X}{\log^2X}f(y)\right)+O\left(\frac{Xf(\log X)}{\log X}\right).
\end{align*}
Substituting the values of $f$ and $y$ into the above equation, we obtain
\begin{align*}
\sum_{p_{n+1}\le X}\frac{\log^{\alpha}d_n}{d_n}=&\frac{X}{\log^2X}(1+o(1))\int_{\log 2}^{\log \left(\log X(\log\log X)^{-1}\right)}u^{\alpha}\,du\\
&+\frac{X}{\log^2X}\left(o(1)+O\left((\log\log X)^{\alpha}\log\log\log X\right)\right).
\end{align*}
Hence we get
\begin{align}\label{31}
\sum_{p_{n+1}\le X}\frac{\log^{\alpha}d_n}{d_n}\sim\begin{cases}\frac{X}{\log^2 X}\frac{(\log\log X)^{1+\alpha}}{1+\alpha}\qquad &\alpha>-1,\\
\quad\frac{X\log\log\log X}{\log^2 X} &\alpha=-1.
\end{cases}
\end{align}
By prime number theorem, the maximum integer $n$ satisfying $p_{n+1}\le X$ is $X\log X(1+o(1))$ and  substituting these values into \eqref{31} above completes the proof of the theorem.

\section*{Acknowledgment}
The author would like to thank the anonymous referees and the editors for their very helpful
comments and suggestions. The author also thank Min-Jie Luo for offering many useful suggestions and
help.

\end{document}